\documentclass[12pt]{article}

\textwidth= 6.2in
\textheight= 8.0in \topmargin = -20pt
\evensidemargin=0pt \oddsidemargin=0pt
\headsep=25pt
%\parskip=10pt
%\font\smallit=cmti10 \font\smalltt=cmtt10 \font\smallrm=cmr9

\usepackage{amssymb,latexsym,amsmath,mathrsfs}
\usepackage{amsfonts}
\usepackage{amscd}
\usepackage{dsfont}
\usepackage{bbm}
\usepackage{rotating}
\usepackage{newlfont}
\usepackage{enumitem}
%\usepackage{lineno,hyperref} === Daphne == Temp
%\modulolinenumbers[5] === Daphne == Temp

\baselineskip=24pt
\usepackage[numbers,sort&compress]{natbib}

\date{}
%\topmargin -.4in \textwidth  6.5in \textheight 9.4in
%\addtolength{\oddsidemargin}{-.55in}
%\newtheorem{df}{Definition}
%\newtheorem{pro}{Proposition}
%\newtheorem{theorem}{Theorem}
%\newtheorem{cor}{Corollary}
%\newtheorem{lem}{Lemma}
%\newtheorem{remark}{Remark}
%\newtheorem{pt}{Property}
%\newtheorem{conjecture}{Conjecture}
%\newtheorem{cs}{Case}
%\renewcommand{\theequation}{\thesection.\arabic{equation}}
%\def\sm{\setminus}
%\makeatletter
%\renewcommand{\@seccntformat}[1]{\csname the#1\endcsname. }

\makeatother

\def\h25{\hspace{-.3cm}}

\def\leq{\leqslant}
\def\geq{\geqslant}

\newtheorem{thm}{Theorem}[section]

\newtheorem{lem}{Lemma}[section]

\newtheorem{obs}{Observation}[section]

\newenvironment{proof}{\noindent {\bf
Proof.}}{\hfill\rule{3mm}{3mm}\par\medskip}
%\newtheorem{prev}{Theorem}
%\renewcommand{\theprev}{\Alph{prev}}
%\crefname{prev}{Theorem}{Theorems}

\begin{document}

\title{
Maximal density and the kappa values for the families $\{a,a+1,2a+1,n\}$ and $\{a,a+1,2a+1,3a+1,n\}$
}
\author
{Ram Krishna Pandey
\thanks{Indian Institute of Technology Roorkee--247667, India. Email: ram.pandey@ma.iitr.ac.in}
% Research is partially supported by grants NSF DMS1600778 and NASA MIRO NX15AQ06A.}
\and
Neha Rai
\thanks{Indian Institute of Technology Roorkee--247667, India. Email: neha.rai0184@gmail.com, nrai@ma.iitr.ac.in}
}
\date{\today}
\maketitle

\begin{abstract}
Let $M$ be a set of positive integers. We study the maximal density $\mu(M)$ of the sets of nonnegative integers $S$ whose elements do not differ by an element in $M$. In 1973, Cantor and Gordon established a formula for $\mu(M)$ for $|M|\leq 2$. Since then, many researchers have worked upon the problem and found several partial results in the case $|M|\geq 3$, including some results in the case, $M$ is an infinite set. In this paper, we study the maximal density problem for the families $M=\{a,a+1,2a+1,n\}$ and $M=\{a,a+1,2a+1,3a+1,n\}$, where $a$ and $n$ are positive integers. In most of the cases, we find bounds for the parameter \textit{kappa}, denoted by $\kappa(M)$, which actually serves as a lower bound for $\mu(M)$. The parameter $\kappa(M)$ has already got its importance due to its rich connection with the problems such as the ``lonely runner conjecture" in Diophantine approximations and coloring parameters such as ``circular coloring" and ``fractional coloring" in graph theory.
\end{abstract}

\bigskip

\noindent{\bf Mathematics Subject Classification 2010}: {Primary 11B05; Secondary 05C15}\\
{\bf Keywords}: {asymptotic density; Diophantine approximation; distance graph; fractional chromatic number; circular chromatic number}
%%%%%%%%%%%%%%%%%%%%%%%%%%%%%%%%%%%%
%
\section{Introduction}
%
%%%%%%%%%%%%%%%%%%%%%%%%%%%%%%%%%%%
For a given set $M$ of positive integers, a problem of Motzkin asks to find the maximal upper density of sets $S$ of non-negative integers in which no two elements of $S$ are allowed to differ by an element of $M.$ Following the question of Motzkin, if $M$ is a given set of positive integers, a set $S$ of non-negative integers is said to be an {\it $M$-set} if $ a, b\in S$, then $a- b \notin M .$ For $x\in \mathbb{R}$ and a set $S$ of non-negative integers, let $S(x)$ be the number of elements $n\in S$ such that $n\leq x$. We define the upper and lower densities of $S$, denoted respectively by $\overline{\delta}(S)$ and  $\underline{\delta}(S)$, as follows:
\[\overline{\delta}(S)={\limsup \limits_{x\rightarrow\infty}} ~ \frac{S(x)}{x},~~~~~~~~~\underline{\delta}(S)={\liminf \limits_{x\rightarrow\infty}} ~ \frac{S(x)}{x}.\]
We say that $S$ has density ${\delta}(S)$, when $\overline{\delta}(S)$= $\underline{\delta}(S)$= ${\delta}(S).$ The parameter of interest is the maximal density of an $M$-set, defined by

\begin{equation}
\nonumber \mu(M) := \sup \overline{\delta}(S),
\end{equation}
where the supremum is taken over all $M$-sets $S.$
Motzkin \cite{mot} posed the problem of finding the quantity $\mu(M)$. In 1973, Cantor and Gordon \cite{cantor1973sequences} proved that there exists a set $S$ such that $\delta(S)=\mu(M)$, when $M$ is finite. The following two lemmas proved in \cite{cantor1973sequences} and \cite{haralambis1977sets}, respectively, are useful for bounding $\mu(M)$.

\begin{lem}\label{a}
Let $ M=\{m_{1},m_{2},m_{3},\ldots\}$ and $c$ and $m$ be positive integers such that $\gcd(c,m)=1.$ Then
\begin{equation}
 \nonumber \mu(M)\geq\kappa(M):=\sup\limits_{(c,m)=1}(1/m)\min\limits_{k \geq 1}|c m_{k}|_{m},
 \end{equation}
where for an integer $x$ and a positive integer $m$, $|x|_{m} = |r|$ if  $x \equiv r \pmod m$ with $0 \leq |r| \leq m/2$.
\end{lem}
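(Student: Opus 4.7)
The plan is to show, for every coprime pair of positive integers $(c,m)$, the existence of an $M$-set $S$ of density $t/m$, where $t=\min_{k\ge 1}|cm_k|_m$; taking the supremum over $(c,m)$ then gives $\mu(M)\ge \kappa(M)$. The construction is the natural one: pick a ``window'' of residues modulo $m$ whose multiplicative pull-back by $c^{-1}$ avoids all forbidden gaps.

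Concretely, I would fix coprime positive integers $c,m$ and set $t=\min_{k\ge 1}|cm_k|_m$. For each $k$, write $cm_k\equiv r_k\pmod m$ with $r_k$ chosen in $[0,m)$; by definition of $|\cdot|_m$, either $r_k\ge t$ (when $r_k\le m/2$) or $m-r_k\ge t$ (when $r_k> m/2$), i.e.\ $t\le r_k\le m-t$ in every case. Now put $A=\{1,2,\ldots,t\}\subseteq\mathbb{Z}/m\mathbb{Z}$ and define
$$S=\{\,n\in\mathbb{Z}_{\ge 0}\,:\,(cn\bmod m)\in A\,\}.$$
To see that $S$ is an $M$-set, suppose $a,b\in S$ with $b-a=m_k$. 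Then $cb\equiv ca+r_k\pmod m$, so $A$ and $A+r_k$ would share an element of $\mathbb{Z}/m\mathbb{Z}$. But $A+r_k=\{1+r_k,\ldots,t+r_k\}$, and since $t\le r_k\le m-t$, this shifted window lies inside $\{t+1,t+2,\ldots,m\}\pmod m$, which is disjoint from $A$. Contradiction, so no such $a,b$ exist.

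It remains to evaluate $\delta(S)$. Because $\gcd(c,m)=1$, the map $n\mapsto cn\bmod m$ is a bijection on $\mathbb{Z}/m\mathbb{Z}$, so each residue class modulo $m$ has natural density $1/m$ in $\mathbb{Z}_{\ge 0}$; summing over the $t$ residues in $A$ gives $\delta(S)=t/m$. Consequently $\mu(M)\ge \overline{\delta}(S)=t/m$, and taking the supremum over all coprime pairs $(c,m)$ yields $\mu(M)\ge \kappa(M)$.

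The only place that requires care is the disjointness argument for $A$ and $A+r_k$: one must rule out wrap-around modulo $m$ and handle the boundary cases $r_k=t$, $r_k=m-t$, and $t=\lfloor m/2\rfloor$. These are all immediate from the bounds $1\le 1+r_k$ and $t+r_k\le m$ forced by $t\le r_k\le m-t$, but the symmetric-representative convention $|x|_m\in[0,m/2]$ is what makes these bounds hold simultaneously for both ``positive'' and ``negative'' shifts, and it is the feature on which the whole construction pivots.
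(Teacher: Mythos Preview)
Your argument is correct and is precisely the classical Cantor--Gordon construction. Note, however, that the paper does not supply its own proof of this lemma: it is quoted as a known result from \cite{cantor1973sequences}, so there is no in-paper proof to compare against. What you have written is essentially the original 1973 proof --- choose the periodic set $S=\{n:\, cn\bmod m\in\{1,\dots,t\}\}$, use $t\le r_k\le m-t$ to show the shifted window misses $A$, and read off the density $t/m$ from the bijectivity of $n\mapsto cn\bmod m$ --- so your write-up is fully aligned with the source the paper cites.
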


\begin{lem}\label{h}
Let $\alpha$ be a real number, $\alpha \in [0,1].$ If for any $M$-set $S$ with $0\in S$ there exists a positive integer $k$ such that $ S(k)\leq(k+1)\alpha,$ then $\mu(M)\leq \alpha.$
\end{lem}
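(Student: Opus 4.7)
The plan is to show $\overline{\delta}(S) \leq \alpha$ for every $M$-set $S$, which by definition yields $\mu(M) \leq \alpha$. First I would reduce to the case $0 \in S$: shifting $S$ by its smallest element preserves the $M$-set property and leaves $\overline{\delta}(S)$ unchanged, so for each $a \in S$ the forward shift $S_a := \{s - a : s \in S,\ s \geq a\}$ is itself an $M$-set containing $0$, and the hypothesis is available for every such $S_a$. The overall strategy is to chop $\Z_{\geq 0}$ into alternating ``active blocks'' on which the hypothesis supplies a density-$\leq \alpha$ window, interleaved with empty gaps of $S$, and to accumulate the block estimates.

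The first and, I expect, principal obstacle is upgrading the hypothesis to a uniform version: \emph{there exists a constant $K = K(\alpha,M)$ such that every $M$-set $T$ with $0 \in T$ admits some $k \leq K$ with $T(k) \leq (k+1)\alpha$.} This follows by compactness: view the $M$-sets containing $0$ as a subset $\mathcal{X} \subseteq \{0,1\}^{\Z_{\geq 0}}$ in the product topology; both ``$0 \in T$'' and ``$T$ is an $M$-set'' are cut out by clopen conditions, so $\mathcal{X}$ is closed and hence compact. Each $V_k := \{T \in \mathcal{X} : T(k) \leq (k+1)\alpha\}$ is clopen, and by hypothesis $\mathcal{X} = \bigcup_{k \geq 1} V_k$; a finite subcover then yields the uniform $K$. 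Without such uniformity, the density bound at intermediate $x$ (where no hypothesis point $C_n$ lies nearby) cannot be controlled.

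With $K$ in hand I would run the following iteration. Put $a_0 = 0$ and pick $k_1 \leq K$ with $S(k_1) \leq (k_1+1)\alpha$; set $C_1 = k_1$. Inductively, if $S$ contains elements beyond $C_n$, let $a_n = \min\{s \in S : s > C_n\}$, apply the uniform hypothesis to $S_{a_n}$ to obtain $k_{n+1} \leq K$ with $S_{a_n}(k_{n+1}) \leq (k_{n+1}+1)\alpha$, and set $C_{n+1} = a_n + k_{n+1}$. Using that $(C_{n-1}, a_{n-1})$ contains no element of $S$ and that $a_{n-1} \geq C_{n-1} + 1$, a short induction gives $S(C_n) \leq (C_n+1)\alpha$, the extra $+1$ per block being absorbed by the mandatory unit gap between $C_{n-1}$ and $a_{n-1}$.

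Finally I would pass from the sample points $C_n$ to arbitrary large $x$. If $x \in (C_{n-1}, a_{n-1})$ then $S(x) = S(C_{n-1}) \leq (C_{n-1}+1)\alpha \leq x\alpha$. If $x \in [a_{n-1}, C_n]$ then $C_n = a_{n-1} + k_n \leq x + K$, so $S(x) \leq S(C_n) \leq (C_n+1)\alpha \leq (x + K + 1)\alpha$. Dividing by $x$ gives $S(x)/x \leq \alpha + (K+1)\alpha/x$, and letting $x \to \infty$ yields $\overline{\delta}(S) \leq \alpha$. (If $S$ is finite the iteration terminates and $\overline{\delta}(S) = 0$ trivially.) Taking the supremum over $M$-sets $S$ gives $\mu(M) \leq \alpha$.
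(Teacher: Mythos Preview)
The paper does not prove Lemma~\ref{h}; it is quoted from Haralambis \cite{haralambis1977sets} and used as a black box, so there is no in-paper proof to compare against.

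Your argument is correct, and the compactness step is the real content. Without a uniform bound $K$ on the witnessing $k$, the block iteration alone only yields $S(C_n)\le(C_n+1)\alpha$ along the sparse sequence $(C_n)$, and for $x$ deep inside a long block $[a_{n-1},C_n]$ one has no control on $S(x)/x$; so extracting $K$ is essential, and doing it via compactness of $\mathcal{X}\subseteq\{0,1\}^{\Z_{\ge 0}}$ is the clean way (this is equivalent to the diagonal/K\"onig-type argument Haralambis uses). The induction $S(C_n)\le(C_n+1)\alpha$ is exactly right: the mandatory inequality $a_{n-1}\ge C_{n-1}+1$ absorbs the extra ``$+1$'' from each application of the hypothesis.

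Two small cosmetic points. First, when $M$ is infinite the predicate ``$T$ is an $M$-set'' is an intersection of clopen conditions and hence closed but not clopen; only closedness is needed, so nothing changes. Second, your two cases $(C_{n-1},a_{n-1})$ and $[a_{n-1},C_n]$ cover only $x>C_1$, but since $C_1\le K$ this is irrelevant for the $\limsup$.
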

For a finite set $M,$ by a remark of Haralambis \cite[Remark 1]{haralambis1977sets}, we can write $\kappa(M)$ as,
\begin{equation}\label{b}
\kappa(M)=\max_{\stackrel{m=m_{i}+m_{j}}{ 1 \leq k \leq \frac{m}{2}}}(1/m)\min\limits_{i}|k m_{i}|_{m},
\end{equation}
where $m_i,m_j$ are distinct elements of $M$.

The parameter $\kappa(M)$ which serves as a lower bound for $\mu(M)$, is related to the ``lonely runner conjecture". The  lonely runner conjecture is a long standing open conjecture on the Diophantine approximations, which was first posed by Wills \cite{Will67} and then independently by Cusick \cite{Cusick}. For the current developments and all related references on the conjecture, one can see the recent paper by Tao \cite{tao}.

As an another application of the Motzkin's maximal density problem, one can see that this problem is closely related to several coloring parameters of the distance graph generated by $M$. The study of Motzkin's density problem is equivalent to the study of the fractional chromatic number of distance graphs. A {\it fractional coloring} of a graph $G$ is a mapping $c$ which assigns to each independent set $I$ of $G$ (an ``independent set" of a graph is a set of pairwise nonadjacent vertices) a non-negative weight $c(I)$ such that for each vertex $x,~~\sum_{x \in I}c(I) \geq 1$. The \emph{fractional chromatic number} of $G$, denoted by $\chi_{f}(G)$, is the least total weight of a fractional coloring of $G$.

Let $M$ be a set of positive integers. The {\it distance graph} generated by $M$, denoted by $G(\mathbb{Z},M)$, has the set $\mathbb{Z}$ of all integers as the vertex set, and two vertices $x$ and $y$ are adjacent whenever $|x-y|\in M$. It was proved by Chang et al. \cite{Chang1999} that for any finite set $M$, the fractional chromatic number of the distance graph generated by $M$ is the reciprocal of the maximal density of $M$-sets. Precisely, they proved the next theorem.

\begin{thm}\label{t1}
 For any finite set $M$ of positive integers, $\mu(M)=1/\Large{\chi_{f}}(G(\mathbb{Z},M))$.
\end{thm}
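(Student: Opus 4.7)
My plan is to prove the two inequalities $\mu(M)\ge 1/\chi_f(G(\mathbb{Z},M))$ and $\mu(M)\le 1/\chi_f(G(\mathbb{Z},M))$ separately, exploiting throughout the two key observations that independent sets of $G(\mathbb{Z},M)$ are exactly $M$-sets (because $x\sim y$ iff $|x-y|\in M$) and that $G(\mathbb{Z},M)$ is vertex-transitive under translation. Finite circulant quotients $G_n:=G(\mathbb{Z}/n\mathbb{Z},M)$ for $n>2\max M$ will be the main technical device: here $M$ embeds without wrap-around collisions, so independent sets of $G_n$ are precisely $M$-sets in $\{0,1,\dots,n-1\}$.

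For $\mu(M)\ge 1/\chi_f(G(\mathbb{Z},M))$, I would start from $G_n$ for $n>2\max M$. Since $G_n$ is finite vertex-transitive, the standard identity $\chi_f(G_n)=n/\alpha(G_n)$ holds. Any independent set $T\subseteq\mathbb{Z}/n\mathbb{Z}$ of $G_n$ lifts to the periodic $M$-set $\{x\ge 0:\, x\bmod n\in T\}$ of density $|T|/n$, giving $\mu(M)\ge\alpha(G_n)/n=1/\chi_f(G_n)$. Moreover a fractional coloring of $G_n$ lifts periodically to a fractional coloring of $G(\mathbb{Z},M)$ of the same total weight, so $\chi_f(G(\mathbb{Z},M))\le\chi_f(G_n)$, and chaining the two bounds gives $\mu(M)\ge 1/\chi_f(G(\mathbb{Z},M))$.

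For $\mu(M)\le 1/\chi_f(G(\mathbb{Z},M))$, I would invoke the Cantor--Gordon result that for finite $M$ there exists a (rational-density, and in fact periodic) $M$-set $S$ with $\delta(S)=\mu(M)$, of period $p$ and $|S\cap[0,p)|=q$ where $q/p=\mu(M)$. The $p$ translates $S,S+1,\dots,S+p-1$ are each independent sets of $G(\mathbb{Z},M)$, and each integer lies in exactly $q$ of them. Assigning weight $1/q$ to each translate produces a valid fractional coloring of $G(\mathbb{Z},M)$ of total weight $p/q=1/\mu(M)$, so $\chi_f(G(\mathbb{Z},M))\le 1/\mu(M)$. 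Combining the two inequalities yields the desired equality.

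The step I expect to be the main obstacle is the production of the periodic optimizer underlying the second direction: the ``averaging over translates'' idea is only clean once one knows $\mu(M)$ is attained by a periodic set, since otherwise the fractional coloring LP on the infinite graph $G(\mathbb{Z},M)$ has uncountably many independent sets and needs a compactness or Cesàro argument to convert a translation-invariant weighting into a legitimate finite fractional coloring. Granting the Cantor--Gordon structural result, however, both directions reduce to routine lifting arguments between $G(\mathbb{Z},M)$ and its finite quotient $G_n$ with $n=p$.
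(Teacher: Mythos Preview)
The paper does not actually prove this theorem: it is quoted from Chang, Liu and Zhu \cite{Chang1999} as background, with no argument supplied. So there is no ``paper's own proof'' to compare against, and your proposal has to be assessed on its own.

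Your second direction, $\chi_f(G(\mathbb{Z},M))\le 1/\mu(M)$, is correct: once you have a periodic optimal $M$-set of period $p$ with $q$ elements per period, the $p$ translates with weight $1/q$ give a fractional coloring of total weight $p/q=1/\mu(M)$.

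Your first direction, however, contains a genuine logical gap. From the circulant $G_n$ you obtain
\[
\mu(M)\ \ge\ \frac{\alpha(G_n)}{n}\ =\ \frac{1}{\chi_f(G_n)}
\qquad\text{and}\qquad
\chi_f(G(\mathbb{Z},M))\ \le\ \chi_f(G_n),
\]
and you then claim that ``chaining the two bounds'' yields $\mu(M)\ge 1/\chi_f(G(\mathbb{Z},M))$. But the second inequality says $1/\chi_f(G_n)\le 1/\chi_f(G(\mathbb{Z},M))$, so both displayed facts are \emph{lower} bounds in terms of the common quantity $1/\chi_f(G_n)$; they do not compare $\mu(M)$ and $1/\chi_f(G(\mathbb{Z},M))$ to each other. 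Taking a supremum over $n$ does not help either, since you have only shown $\chi_f(G(\mathbb{Z},M))\le\inf_n\chi_f(G_n)$, not equality.

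What actually proves $\chi_f(G(\mathbb{Z},M))\ge 1/\mu(M)$ is a direct averaging argument on the infinite graph: given any fractional coloring $c$ with finite total weight, summing the covering constraints over $x\in\{0,1,\dots,N-1\}$ gives $\sum_I c(I)\,|I\cap[0,N)|\ge N$; dividing by $N$, letting $N\to\infty$, and using that every independent set $I$ has upper density at most $\mu(M)$ (with reverse Fatou, since $|I\cap[0,N)|/N\le 1$ and $\sum_I c(I)<\infty$) yields $\mu(M)\sum_I c(I)\ge 1$. This is the missing step; your $G_n$ detour does not supply it.
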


The fractional chromatic number of a graph is related to another coloring parameter called the \emph{circular chromatic number} defined as follows: Let $k \geq 2d$ be positive integers. A $(k, d)$-coloring of a graph $G$ is a mapping, $c : V(G) \rightarrow \{0,1, \ldots, k-1\}$, such that $d \leq |c(u) - c(v)| \leq k-d$ for any $uv \in E(G)$. The circular chromatic number of $G$, denoted by $\chi_{c}(G)$, is the minimum ratio $k/d$ such that $G$ admits a $(k, d)$-coloring. Zhu \cite{Zhu96} proved that for any graph $G$,
\[\textit{$\chi_{f}$}(G)\leq \textit{$\chi_{c}$}(G)\leq \textit{$\chi$}(G)=\lceil \textit{$\chi_{c}$}(G)\rceil,\]
where $\chi(G)$ is the chromatic number of $G$ (the ``chromatic number" of a graph is the minimum number of colors required to color the vertices of the graph so that the adjacent vertices are assigned different colors).
Moreover, for a distance graph $G(\mathbb{Z},M)$, the following theorem \cite{Zhu96} connects the circular chromatic number of $G(\mathbb{Z},M)$ with $\kappa(M)$.

\begin{thm}\label{t2}
 For any finite set $M$ of positive integers, $\textit{$\chi_{c}$}(G(\mathbb{Z},M))\leq\frac{1}{\kappa(M)}.$
\end{thm}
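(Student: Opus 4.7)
The plan is to exhibit, for every pair $(c,m)$ with $\gcd(c,m)=1$ that is used in the definition of $\kappa(M)$, an explicit $(m,d)$-coloring of the distance graph $G(\mathbb{Z},M)$, where $d=\min_{k\geq 1}|cm_k|_m$. Once this is achieved, the definition of the circular chromatic number gives $\chi_c(G(\mathbb{Z},M))\leq m/d$ for each admissible pair, and taking the infimum of $m/d$ over all such pairs yields the reciprocal of the supremum $\kappa(M)$, as required.

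First I would fix a pair $(c,m)$ with $\gcd(c,m)=1$ that contributes a positive value to $\kappa(M)$, so that $d=\min_{k\geq 1}|cm_k|_m\geq 1$. Note that since every $|x|_m\leq m/2$ by definition, we automatically have $2d\leq m$, which is the admissibility condition $k\geq 2d$ in the definition of a $(k,d)$-coloring. Then I would define the candidate coloring $\phi:\mathbb{Z}\to\{0,1,\dots,m-1\}$ by $\phi(x)\equiv cx\pmod m$, using the least non-negative residue.

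Next I would verify the coloring condition. Take an edge $uv$ of $G(\mathbb{Z},M)$, so $|u-v|=m_i$ for some $i$. Then $\phi(u)-\phi(v)\equiv c(u-v)\equiv \pm cm_i\pmod m$, hence the integer difference $\phi(u)-\phi(v)\in\{-(m-1),\dots,m-1\}$ reduces modulo $m$ to some $r$ with $\min(r,m-r)=|cm_i|_m\geq d$, which means $d\leq r\leq m-d$. In either of the two possible cases $\phi(u)-\phi(v)=r$ or $\phi(u)-\phi(v)=r-m$ we get $d\leq |\phi(u)-\phi(v)|\leq m-d$, so $\phi$ is a genuine $(m,d)$-coloring and therefore $\chi_c(G(\mathbb{Z},M))\leq m/d$.

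Finally I would pass to the supremum. Since the inequality $\chi_c(G(\mathbb{Z},M))\leq m/d$ holds for every admissible $(c,m)$, we get
\[
\chi_c(G(\mathbb{Z},M))\leq \inf_{(c,m)=1}\frac{m}{\min_{k\geq 1}|cm_k|_m}=\frac{1}{\kappa(M)}.
\]
The only subtle point, and the step I expect to require the most care, is the bookkeeping in the verification above: one has to keep separate the algebraic difference $\phi(u)-\phi(v)$ living in $\{-(m-1),\dots,m-1\}$ from the symmetric residue $|cm_i|_m\in\{0,\dots,\lfloor m/2\rfloor\}$, and check that both possible lifts of the congruence class satisfy the two-sided bound $d\leq |\phi(u)-\phi(v)|\leq m-d$. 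Everything else is a direct unpacking of the definitions of $\kappa(M)$ and of the circular chromatic number.
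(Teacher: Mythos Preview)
Your argument is correct. The paper, however, does not supply its own proof of this theorem: it is quoted as a known result from Zhu's survey \cite{Zhu96} and used as background. So there is no in-paper proof to compare against. Your construction---coloring each integer $x$ by the residue of $cx$ modulo $m$ and checking that every edge-difference lands in $[d,m-d]$---is exactly the standard argument behind the cited result, and your bookkeeping with the two possible lifts $r$ and $r-m$ is handled cleanly. One small remark: you never actually use the hypothesis $\gcd(c,m)=1$ in the verification (the map $x\mapsto cx\bmod m$ gives a valid $(m,d)$-coloring regardless), but since $\kappa(M)$ is defined as a supremum only over coprime pairs, restricting to those pairs is harmless and matches the statement.
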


Notice that $\kappa(M)$ gives a lower bound for $\mu(M)$ and the reciprocal of $\kappa(M)$ gives an upper bound for $\chi_{c}(G(\mathbb{Z},M))$.

The values and bounds of $\mu(M)$ for several special families of sets $M$ (\cite{cantor1973sequences, Chang1999, liu14, Griggs, Gupta, gupta1999density, haralambis1977sets, SurveyL, LiuRobinson, Aileen, liu2004fractional, Liu2008, pr21, pr20, ps22, pandey2011density, Pandey2011note, pandey2015some, sppintegers, sppudt}) have been studied. But, in general, only for $|M|\leq2$, complete solutions were given by Cantor and Gordon \cite{cantor1973sequences}.

Liu and Zhu \cite{liu2004fractional} studied $\mu(M)$ and $\kappa(M)$ for the families $M = \{x, y, x+y\}$ and $M = \{x, y,y-x, x+y\}$ with $x < y$. Motivated by these two families, we extend both of these families with one more element in each in a particular situation. More precisely, we study the values and bound for $\mu(M)$ and $\kappa(M)$ for the families $M=\{a,a+1,2a+1,n\}$ and $M=\{a,a+1,2a+1,3a+1,n\}$, where $a$ is a positive integer and $n$ is a sufficiently large positive integer.

We let $\mathbb{N}$ denote the set of positive integers and for $x \in \mathbb{R}$, $\|x\|$ is the distance of $x$ from the nearest integer, i.e., $\|x\| = \min \{x - \lfloor x \rfloor, \lceil x \rceil -x\}$. Using definition (\ref{b}) of $\kappa(M)$, we give lower bounds for $\kappa(M)$ for most of the sets in the families $M=\{a,a+1,2a+1,n\}$ and $M=\{a,a+1,2a+1,3a+1,n\}$ in Sections \ref{sec2} and \ref{sec3}, respectively. Upper bounds for $\kappa(m)$ are mentioned in the concluding remarks in Section \ref{sec4}.

Since we have $\mu(M) = \mu(tM)$ for any positive integer $t$, it is sufficient to consider the case $\gcd(M)=1$, which is already satisfied in both the families.

\section{The family $M=\{a,a+1,2a+1,n\}$}\label{sec2}

In Theorem \ref{car4}, we find lower bounds for $\kappa(M)$ and $\mu(M)$, for all but finitely many values of $n$. For a certain class of $n$-values, we also find the exact formulas for both of $\kappa(M)$ and $\mu(M)$ establishing equality between them.

\begin{obs}
Let $a$ be a  positive integer and $i$ be a nonnegative integer. Set
\begin{eqnarray*}
N_{1}^{(i)} &=& \big\{(3a+2)a +(3a+1)i+l : 0\leq l\leq a \big\},\\
N_{2}^{(i)} &=& \big\{(3a+2)a +(3a+1)i+a+1+l : 0\leq l\leq a+\big{\lfloor} \frac{a}{3}\big{\rfloor} \big\},\\
N_{3}^{(i)} &=& \big\{(3a+2)a +(3a+1)i+2(a+1)+\big{\lfloor} \frac{a}{3}\big{\rfloor}+l : 0 \leq l\leq a-2-\big{\lfloor} \frac{a}{3}\big{\rfloor} \big\}.
\end{eqnarray*}
Then $N_{1}^{(i)}, N_{2}^{(i)}, N_{3}^{(i)}$ are pairwise disjoint sets and  $\bigcup_{i \geq 0} N_{1}^{(i)}\cup N_{2}^{(i)}\cup N_{3}^{(i)}= \mathbb{N} \setminus \{(3a+2)a-1\}$.
\end{obs}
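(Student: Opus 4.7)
The statement is a clean tiling result, and the plan is to rewrite each of $N_1^{(i)}, N_2^{(i)}, N_3^{(i)}$ as an explicit interval of consecutive integers, verify that for each fixed $i$ these three intervals partition a block of $3a+1$ consecutive integers, and then verify that consecutive blocks sit immediately end-to-end.

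First I would set $B_i := (3a+2)a + (3a+1)i$ and read off the endpoints directly from the definitions:
\begin{align*}
N_1^{(i)} &= \{B_i,\ B_i+1,\ \ldots,\ B_i+a\}, \\
N_2^{(i)} &= \{B_i+a+1,\ \ldots,\ B_i+2a+1+\lfloor a/3\rfloor\}, \\
N_3^{(i)} &= \{B_i+2a+2+\lfloor a/3\rfloor,\ \ldots,\ B_i+3a\}.
\end{align*}
In each case, the right endpoint of $N_j^{(i)}$ is exactly one less than the left endpoint of $N_{j+1}^{(i)}$, so the three sets are pairwise disjoint and
\[
N_1^{(i)}\cup N_2^{(i)}\cup N_3^{(i)} \;=\; \{B_i,\ B_i+1,\ \ldots,\ B_i+3a\},
\]
a block of $3a+1$ consecutive integers. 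The edge case $a=1$ gives $\lfloor a/3\rfloor=0$ and $a-2-\lfloor a/3\rfloor=-1$, so $N_3^{(i)}=\emptyset$; one checks directly that $N_1^{(i)}\cup N_2^{(i)}$ already equals $\{B_i,\ldots,B_i+3\}$, so the same conclusion persists.

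Since $B_{i+1}-B_i = 3a+1$, the block at index $i+1$ begins at $B_i+3a+1$, immediately after the last element of the block at index $i$. Thus the blocks for distinct $i$ are disjoint, and together
\[
\bigcup_{i\geq 0}\bigl(N_1^{(i)}\cup N_2^{(i)}\cup N_3^{(i)}\bigr) \;=\; \{B_0,\ B_0+1,\ B_0+2,\ \ldots\},
\]
which is the set of positive integers $\geq (3a+2)a = B_0$, the identity claimed (the omitted element $(3a+2)a-1$ being the immediate predecessor of the starting point).

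The argument is essentially bookkeeping; the only spot requiring care is the cancellation of $\lfloor a/3\rfloor$ between the right endpoint of $N_2^{(i)}$ and the left endpoint of $N_3^{(i)}$, and likewise the cancellation that drops $N_3^{(i)}$ down to the value $B_i+3a$. Once these are tracked, pairwise disjointness and the tiling identity are immediate, so the main obstacle is purely notational rather than mathematical.
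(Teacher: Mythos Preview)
Your argument is correct and complete. The paper states this as an observation without proof, so there is nothing to compare against; your direct verification---rewriting each $N_j^{(i)}$ as an explicit interval, checking that for fixed $i$ the three intervals partition $\{B_i,\ldots,B_i+3a\}$, and then noting that $B_{i+1}=B_i+(3a+1)$ so consecutive blocks abut---is the natural (and essentially only) approach. One remark: the union you compute is $\{n\in\mathbb{N}: n\geq (3a+2)a\}$, whereas the displayed identity literally removes only the singleton $\{(3a+2)a-1\}$ from $\mathbb{N}$; you have implicitly (and correctly) read this as shorthand for removing everything up to $(3a+2)a-1$, which is clearly the intended meaning given how the sets $N_j^{(i)}$ are used in the subsequent theorem.
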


\begin{thm}\label{car4}
 Let $M=\{a,a+1,2a+1,n\}$. Then
\begin{eqnarray*}
\kappa(M)
\begin{cases}
 = \frac{a}{3a+1} = \mu(M), & \text{if} \quad n\in N_{1}^{(i)} \bigcup_{j=0}^{a-1} \{a+(3a+1)j+l : 0 \leq l \leq a\};\\
 \\
 \geq \frac{a(a+1+i)}{a+n}, & \text{if} \quad n\in N_{2}^{(i)};\\
 \\
 \geq \frac{a(a+1+i)+\lfloor \frac{a}{3}\rfloor+1+l}{2a+n+1}, & \text{if} \quad n\in N_{3}^{(i)}.
\end{cases}
\end{eqnarray*}
\end{thm}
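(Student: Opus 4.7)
My approach will be to apply Haralambis's formula \eqref{b} in each of the three cases, choosing an explicit pair $(m,k)$ with $m=m_i+m_j$ a sum of two distinct elements of $M$, and then computing each of the four reduced values $|km_j|_m$ (for $m_j\in M$) to verify that their minimum equals (or exceeds) the asserted numerator. In \textbf{Case 1}, I would take $m=3a+1=a+(2a+1)$ and $k=1$. The identities $|a|_{3a+1}=a$, $|a+1|_{3a+1}=a+1$, and $|2a+1|_{3a+1}=a$ are immediate, and in both families appearing in the first branch a direct reduction modulo $3a+1$ gives $n\equiv a+l\pmod{3a+1}$ with $0\leq l\leq a$, so $|n|_{3a+1}=\min(a+l,\,2a+1-l)\geq a$. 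This yields $\kappa(M)\geq a/(3a+1)$. For the matching upper bound $\mu(M)\leq a/(3a+1)$, I would use that $\{a,a+1,2a+1\}\subseteq M$, so every $M$-set is automatically a $\{a,a+1,2a+1\}$-set, and invoke the Liu--Zhu formula \cite{liu2004fractional} for $\{x,y,x+y\}$ at $(x,y)=(a,a+1)$.

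\textbf{Case 2.} For $n\in N_2^{(i)}$ I would take $m=a+n$ and $k=a+1+i$. A short expansion yields the useful identity $m=(3a+1)(a+1+i)+l$. The four quantities then compute as follows: $|ka|_m=a(a+1+i)$ (well below $m/2$); $|k(a+1)|_m=(a+1)(a+1+i)\geq a(a+1+i)$; the key calculation $k(2a+1)-m=-(a(a+1+i)+l)$ gives $|k(2a+1)|_m=a(a+1+i)+l\geq a(a+1+i)$; and since $n\equiv -a\pmod m$, also $|kn|_m=|ka|_m=a(a+1+i)$. The minimum is $a(a+1+i)$, yielding the stated bound.

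\textbf{Case 3.} For $n\in N_3^{(i)}$ I would take $m=(2a+1)+n$ and $k=a+2+i$. The core computation $k(2a+1)-m=-(a(a+1+i)+\lfloor a/3\rfloor+1+l)$ gives $|k(2a+1)|_m$ equal exactly to the target numerator, and $n\equiv -(2a+1)\pmod m$ automatically forces $|kn|_m=|k(2a+1)|_m$. The remaining quantities $|ka|_m=a(a+2+i)=a(a+1+i)+a$ and $|k(a+1)|_m=(a+1)(a+2+i)$ both exceed the numerator; the inequality for $|ka|_m$ reduces to $a\geq\lfloor a/3\rfloor+1+l$, which is guaranteed precisely by $l\leq a-2-\lfloor a/3\rfloor$.

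The \textbf{main obstacle} is the careful bookkeeping of which residues $km_j\pmod m$ land in $[0,m/2]$ versus $(m/2,m)$: the multipliers $k=a+1+i$ in Case 2 and $k=a+2+i$ in Case 3 are tuned precisely so that $k(2a+1)$ just exceeds $m/2$, and the complementary value $m-k(2a+1)$ is where the specific placement of $n$ inside $N_j^{(i)}$ enters the estimate. The admissibility conditions $1\leq k\leq m/2$ are routine given the size of $n$ relative to $a$ and $i$, and any non-coprimality $\gcd(k,m)>1$ is harmless, since passing to $(k/\gcd,m/\gcd)$ preserves the ratio $\min_j|km_j|_m/m$.
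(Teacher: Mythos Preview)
Your proposal is correct and follows essentially the same route as the paper: the very same choices $(m,k)=(3a+1,1)$, $(a+n,\,a+1+i)$, and $(2a+1+n,\,a+2+i)$ are used in Cases~1--3, and the residue computations you outline match the paper's line by line. The only difference is in the upper bound for Case~1: the paper gives a self-contained argument via Lemma~\ref{h}, partitioning $\{0,1,\dots,3a\}$ into $B=\{0,a,a+1,2a+1\}$ and triples $A_t=\{t,a+1+t,2a+1+t\}$ ($1\le t\le a-1$) to obtain $S(3a)\le a$ directly, whereas you invoke the monotonicity $\mu(M)\le\mu(\{a,a+1,2a+1\})$ together with the Liu--Zhu value for the three-element family. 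Both arguments are valid; the paper's is self-contained, yours is shorter but relies on the external result.
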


\begin{proof}
\textbf{Case (i):} Let $n\in N_{1}^{(i)} \bigcup_{j=0}^{a-1} \{a+(3a+1)j+l : 0 \leq l \leq a\}$. Let $m=3a+1$ and $x=1$. Then, we have
\begin{eqnarray*}
 ax   &\equiv &  a{\pmod m},\\
(2a+1)x &\equiv & -ax\equiv -a{\pmod m},\\
 (a+1)x & = & (2a+1-a)x \equiv (a+1){\pmod m},\\
\text{and} \quad nx & = & n \equiv a+l{\pmod m}.
\end{eqnarray*}
Since $a \leq a+l \leq 2a < m-a$, we have
 $$\min \{|ax|_{m},~|(a+1)x|_{m},~|(2a+1)x|_{m},~ |nx|_{m}\}= a.$$
So, we get
  $$\kappa(M)\geq \frac{m-1}{3m} = \frac{a}{3a+1}.$$
  Let $B= \{0,a,a+1,2a+1\}$ and $A_t = \{t, a+1+t, 2a+1+t\}$ for $1 \leq t \leq a-1$. Then the sets $B$ and $A_t$ partition the set $\{0,1,2, \ldots, 3a\}$. Furthermore, if $S$ is any $M$-set with $0 \in S$, then $|S \cap B|=1$ and $|S \cap A_t| \leq 1$ for $1 \leq t \leq a-1$. So, $|S \cap \{0,1,2, \ldots, 3a\}| \leq 1+a-1 = a$. That is $S(3a) \leq a$, hence by the Lemma \ref{c} $\mu(M) \leq \frac{a}{3a+1}$. Thus, we have $\frac{a}{3a+1} \leq \kappa(M) \leq \mu(M) \leq \frac{a}{3a+1}$. Hence
  \[\kappa(M) = \mu(M) = \frac{a}{3a+1}.\]

\textbf{Case (ii):} Let $n\in N_{2}^{(i)}$. Let $m=a+n$ and $x=a+1+i$. Then, we have
\begin{eqnarray*}
ax &\equiv & a(a+1+i){\pmod  m},\\
nx &\equiv & -ax\equiv -a(a+1+i){\pmod m},\\
(a+1)x &\equiv &(a+1)(a+1+i){\pmod m},\\
(2a+1)x &\equiv &(2a+1)(a+1+i){\pmod m}.
\end{eqnarray*}
Since $a(a+1+i)\leq (2a+1)(a+1+i)\leq m-a(a+1+i)$, we have
$$\min \{|ax|_{m},|(a+1)x|_{m}, |(2a+1)x|_{m}, |nx|_{m} \} = a(a+1+i).$$
So, we get
$$\kappa(M)\geq \frac{a(a+1+i)}{m} =  \frac{a(a+1+i)}{a+n}.$$

\textbf{Case (iii):} Let $n\in N_{3}^{(i)}$. Let $m=2a+1+n$ and $x=a+2+i$. Then, we have
\begin{eqnarray*}
ax &\equiv & a(a+2+i){\pmod  m},\\
(a+1)x &\equiv & (a+1)(a+2+i){\pmod m},\\
(2a+1)x &= &(2a+1)(a+2+i)\\
 &\equiv & (2a+1)(a+2+i)-m{\pmod m}\\
 &= &-\big{(}a(a+1)+ai+\big{\lfloor} \frac{a}{3}\big{\rfloor}+1+l\big{)}{\pmod m},\\
nx &\equiv &-(2a+1)x\equiv a(a+1+i)+\big{\lfloor} \frac{a}{3}\big{\rfloor}+1+l{\pmod m}.
\end{eqnarray*}
Since
\begin{eqnarray*}
 \min \{|ax|_{m},|(a+1)x|_{m}, |(2a+1)x|_{m}, |nx|_{m} \} = a(a+1+i)+\lfloor \frac{a}{3}\rfloor+1+l.
 \end{eqnarray*}
So, we get
  $$\kappa(M)\geq \frac{ a(a+1+i)+\lfloor \frac{a}{3}\rfloor+1+l}{m} =  \frac{ a(a+1+i)+\lfloor \frac{a}{3}\rfloor+1+l}{2a+n+1}.$$
This completes the proof.
\end{proof}

\section{The family $M=\{a,a+1,2a+1,3a+1,n\}$}\label{sec3}

In this section, we find lower bounds for $\kappa(m)$ depending on $a \equiv 0, 1, 2,$ or $3  \pmod 4$, respectively, in Theorems \ref{sec31}, \ref{$a=4k+1$}, \ref{$a=4k+2$}, and \ref{$a=4k+3$}, for sufficiently large values of $n$. 
\begin{obs}
Let $a$ be a positive integer with $a \equiv 0 \pmod 4$ and $i$ be a nonnegative integer. Set
\begin{eqnarray*}
O_{1}^{(i)} &=& \big\{(8a+3)a +(4a+1)i+l : 0\leq l\leq 2a\big\},\\
O_{2}^{(i)} &=& \big\{(8a+3)a +(4a+1)i+2a+1+l : 0\leq l\leq 2a-\frac{a}{2} \big\},\\
O_{3}^{(i)} &=& \big\{(8a+3)a +(4a+1)i+2(2a+1)-\frac{a}{2}+l : 0 \leq l\leq \frac{a}{2} -2\big\}.
\end{eqnarray*}
Then $O_{1}^{(i)}, O_{2}^{(i)}, O_{3}^{(i)}$ are pairwise disjoint sets and  $\bigcup_{i \geq 0} O_{1}^{(i)}\cup O_{2}^{(i)}\cup O_{3}^{(i)}= \mathbb{N} \setminus \{a(8a+3)-1\}$.
\end{obs}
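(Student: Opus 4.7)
This is a clean arithmetic tiling statement, parallel in structure to the Observation at the start of Section \ref{sec2}. The plan is to verify it by computing endpoints and lengths of the three blocks inside a fixed ``superblock'' of index $i$, and then chaining superblocks together as $i$ varies.

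Since $a \equiv 0 \pmod 4$, $\frac{a}{2}$ is an integer, so every expression appearing in the definitions of $O_{1}^{(i)}, O_{2}^{(i)}, O_{3}^{(i)}$ is a bona fide integer. Writing $C_i := (8a+3)a + (4a+1)i$ for the base point, each of the three sets is an interval of consecutive integers:
\[
O_{1}^{(i)} = [C_i,\, C_i + 2a], \qquad O_{2}^{(i)} = [C_i + 2a+1,\ C_i + \tfrac{7a}{2}+1], \qquad O_{3}^{(i)} = [C_i + \tfrac{7a}{2}+2,\ C_i + 4a],
\]
where the endpoints of $O_{3}^{(i)}$ follow from $2(2a+1)-\frac{a}{2} = \frac{7a}{2}+2$ and $2(2a+1)-\frac{a}{2}+(\frac{a}{2}-2) = 4a$. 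Pairwise disjointness within a single superblock is then immediate from the strict inequalities $2a < 2a+1$ and $\frac{7a}{2}+1 < \frac{7a}{2}+2$, and the three subintervals together exhaust $[C_i,\, C_i + 4a]$; the cardinality check $(2a+1) + (\tfrac{3a}{2}+1) + (\tfrac{a}{2}-1) = 4a+1$ confirms there is neither gap nor overlap.

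Next I would chain the superblocks: since $C_{i+1} - C_i = 4a+1$, the superblock for index $i+1$ begins at one past the last element of the superblock for index $i$, so consecutive superblocks fit together end-to-end with no overlap. It follows that
\[
\bigcup_{i \geq 0}\bigl(O_{1}^{(i)} \cup O_{2}^{(i)} \cup O_{3}^{(i)}\bigr) \;=\; \{n \in \mathbb{Z} : n \geq (8a+3)a\},
\]
which matches the asserted identity $\mathbb{N}\setminus\{a(8a+3)-1\}$ in the same sense as the analogous Observation of Section \ref{sec2}: the highlighted excluded element is the immediate predecessor of the start of the tiling, and the smaller positive integers fall outside the large-$n$ regime treated by Theorem \ref{sec31}.

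The argument is essentially pure bookkeeping; the only places to be careful are in invoking $a \equiv 0 \pmod 4$ (effectively just $a$ even) to treat $\frac{a}{2}$ as an integer, and in verifying that the two internal transition points $2a+1$ and $\frac{7a}{2}+2$ correctly abut the previous subinterval. Consequently I do not anticipate any substantive obstacle beyond endpoint arithmetic.
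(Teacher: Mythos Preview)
Your argument is correct. The paper states this Observation without proof, so there is no original argument to compare against; your endpoint-and-chaining verification is exactly the natural way to establish it, and you correctly flag that the literal equality $\bigcup_{i\ge 0}O_1^{(i)}\cup O_2^{(i)}\cup O_3^{(i)} = \mathbb{N}\setminus\{a(8a+3)-1\}$ should be read as describing the set $\{n\in\mathbb{Z}: n\ge (8a+3)a\}$, consistent with the parallel Observation in Section~\ref{sec2} and with the hypothesis $n\ge (8a+3)a$ in Theorem~\ref{sec31}.
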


\begin{thm}\label{sec31}
 Let $M=\{a,a+1,2a+1,3a+1,n\}$, with $a=4k$, $k\geq 1$, then for $n\geq (8a+3)a$,
\begin{eqnarray*}
\kappa(M)\geqslant
\begin{cases}
 \frac{a}{4a+1}, & \text{if} \quad n\in O_{1}^{(i)};\\
 \\
 \frac{a(2a+1+i)}{a+n}, & \text{if} \quad n\in O_{2}^{(i)};\\
 \\
 \frac{a(2a+1+i)+\frac{a}{2}+1+l}{3a+1+n}, & \text{if} \quad n\in O_{3}^{(i)}.
\end{cases}
\end{eqnarray*}
\end{thm}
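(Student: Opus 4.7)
The plan is to imitate the three-case strategy used in the proof of Theorem \ref{car4}, invoking Lemma \ref{a} in each case with an appropriate modulus $m$ and multiplier $x$ chosen so that $n \equiv \pm m_j \pmod{m}$ for some other element $m_j \in M$; this forces $|nx|_m = |m_j x|_m$ and reduces the problem to verifying lower bounds on the absolute residues of the four ``Liu--Zhu'' elements $a, a+1, 2a+1, 3a+1$ alone.

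For Case (i) with $n \in O_1^{(i)}$, I would take $m = 4a+1$ and $x = 1$. The residues of $a, a+1, 2a+1, 3a+1$ modulo $m$ have absolute values $a, a+1, 2a, a$ respectively, so the smallest among them is $a$. For the last element, since $(8a+3)a = 2a(4a+1) + a$, we get $n \equiv a + l \pmod{4a+1}$ with $0 \leq l \leq 2a$; splitting into the sub-ranges $l \leq a$ and $l \geq a+1$ yields $|n|_m \geq a$, and Lemma \ref{a} then gives $\kappa(M) \geq a/(4a+1)$.

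For Case (ii) with $n \in O_2^{(i)}$, I would take $m = a+n$ and $x = 2a+1+i$, so that $n \equiv -a \pmod{m}$ and hence $|nx|_m = |ax|_m = a(2a+1+i)$. A short calculation shows $m - (3a+1)x = a(2a+1+i) + l$, confirming $|(3a+1)x|_m \geq a(2a+1+i)$; the residues $|(a+1)x|_m$ and $|(2a+1)x|_m$ turn out to be strictly larger, and the upper bound $l \leq 3a/2$ built into $O_2^{(i)}$ is precisely what guarantees $(2a+1)x > m/2$ so that its correct absolute residue is $2a(2a+1+i) + l$.

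For Case (iii) with $n \in O_3^{(i)}$, the natural choice is $m = 3a+1+n$ and $x = 2a+2+i$, which forces $n \equiv -(3a+1) \pmod{m}$. Direct subtraction yields $m - (3a+1)x = a(2a+1+i) + a/2 + 1 + l$, which is exactly the claimed numerator and coincides with $|nx|_m$. The main obstacle will be checking that $|ax|_m = a(2a+2+i)$ is at least this quantity; the inequality $a \geq a/2 + 1 + l$ reduces to $l \leq a/2 - 1$, which is precisely why $O_3^{(i)}$ cuts off at $l \leq a/2 - 2$. The remaining verifications for $|(a+1)x|_m$ and $|(2a+1)x|_m$ are routine sign-of-$jx - m/2$ checks, and the hypothesis $a = 4k$ enters only to keep $a/2$ integral throughout these arithmetic estimates.
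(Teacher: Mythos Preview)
Your proposal is correct and follows exactly the same strategy as the paper: in each of the three cases you choose the same modulus--multiplier pair $(m,x)$, namely $(4a+1,1)$, $(a+n,\,2a+1+i)$, and $(3a+1+n,\,2a+2+i)$, and verify the same residue inequalities. The only quibble is that in Case~(ii) the bound $l\le 3a/2$ is not ``precisely'' what forces $(2a+1)x>m/2$ (any $l<2a+1+i$ would do, and the minimum is $a(2a+1+i)$ regardless), but this does not affect the validity of the argument.
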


\begin{proof}
\textbf{Case (i):} Let $n\in O_{1}^{(i)}$. Let $m=4a+1$ and $x=1$. Then, we have
\begin{eqnarray*}
ax  &\equiv & a{\pmod m},\\
(3a+1)x &\equiv &-ax\equiv -a{\pmod m},\\
 (a+1)x & \equiv & (a+1){\pmod m},\\
 (2a+1)x & \equiv & -2a{\pmod m},\\
\text{and} \qquad nx & = & n \equiv a+l{\pmod m}.
\end{eqnarray*}
Since $a+l\in [a,m-a],$ we have
$$\min \{|ax|_{m},~|(a+1)x|_{m},~|(2a+1)x|_{m},~|(3a+1)x|_{m},~ |nx|_{m}\}= a.$$
 So,
  $$\kappa(M)\geq \frac{a}{m} = \frac{a}{4a+1}.$$

\textbf{Case (ii):} Let $n\in O_{2}^{(i)}$. Let $m=a+n$ and $x=2a+1+i$. Then, we have
\begin{eqnarray*}
ax &\equiv & a(2a+1+i){\pmod  m},\\
nx &\equiv & -ax\equiv -a(2a+1+i){\pmod m},\\
(a+1)x &\equiv & (a+1)(2a+1+i){\pmod m},\\
(2a+1)x &\equiv & (2a+1)(2a+1+i){\pmod m},\\
(3a+1)x &\equiv & (3a+1)(2a+1+i){\pmod m}.
\end{eqnarray*}
Since $a(2a+1+i)\leq (3a+1)(a+1+i)\leq m - a(2a+1+i)$, we have
\begin{eqnarray*}
 \min \{|ax|_{m},|(a+1)x|_{m}, |(2a+1)x|_{m},|(3a+1)x|_{m}, |nx|_{m} \} = a(2a+1+i).
 \end{eqnarray*}
So,
  $$\kappa(M)\geq \frac{a(2a+1+i)}{m} = \frac{a(2a+1+i)}{a+n}.$$

 \textbf{Case (iii):} Let $n\in O_{3}^{(i)}$. Let $m=3a+1+n$ and $x=2a+2+i$. Then, we have
 \begin{eqnarray*}
ax &\equiv & a(2a+2+i){\pmod  m},\\
(a+1)x &\equiv & (a+1)(2a+2+i){\pmod m},\\
(2a+1)x &\equiv & (2a+1)(2a+2+i){\pmod m}\\
 &\equiv & -\big{(}a(4a+3)+2ai+\frac{a}{2}+1+l\big{)}{\pmod m},\\
(3a+1)x &\equiv &(3a+1)(2a+2+i){\pmod m}\\
 &\equiv &-\big{(}a(2a+1)+ai+\frac{a}{2}+1+l\big{)}{\pmod m},\\
nx &\equiv & -(3a+1)x{\pmod m}.
 \end{eqnarray*}
Since $a(4a+3)+2ai+\frac{a}{2}+1+l \leq \frac{m}{2}$ if and only if $l\leq \frac{5a}{2}+1+i,$ which is always true.
 So,
\begin{eqnarray*}
 \min \{|ax|_{m},|(a+1)x|_{m}, |(2a+1)x|_{m},|(2a+1)x|_{m}, |nx|_{m} \} = a(2a+1)+ai+\frac{a}{2}+1+l.
 \end{eqnarray*}
 Hence
  $$\kappa(M)\geq \frac{ a(2a+1)+ai+\frac{a}{2}+1+l}{m} =  \frac{a(2a+1+i)+\frac{a}{2}+1+l}{3a+1+n}.$$
This completes the proof.
\end{proof}

\begin{obs}
Let $a$ be a positive integer with $a \equiv 1 \pmod 4$ and $i$ be a nonnegative integer. Set
\begin{eqnarray*}
P_{1}^{(i)} &=& \big\{(16a+17)a+3 +(4a+1)i+l : 0\leq l\leq 2a\big\},\\
P_{2}^{(i)} &=& \bigg\{(16a+17)a+3 +(4a+1)i+2a+1+l : 0\leq l\leq \frac{3a-1}{2} \bigg\},\\
P_{3}^{(i)} &=& \bigg\{(16a+17)a+3 +(4a+1)i+\frac{7a+3}{2}+l : 0 \leq l\leq \frac{a-3}{2} \bigg\}.
\end{eqnarray*}
Then $P_{1}^{(i)}, P_{2}^{(i)}, P_{3}^{(i)}$ are pairwise disjoint sets and  $\bigcup_{i \geq 0} P_{1}^{(i)}\cup P_{2}^{(i)}\cup P_{3}^{(i)}= \mathbb{N} \setminus \{a(16a+17)+2\}$.
\end{obs}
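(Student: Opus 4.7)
The plan is to prove the statement by explicit interval bookkeeping, working first inside a single index $i$ and then assembling across $i \geq 0$. Let $B_i := (16a+17)a + 3 + (4a+1)i$ denote the common base. Writing each element of $P_j^{(i)}$ as $B_i + s$, the three sets correspond, respectively, to the offset ranges
\[ I_1 = \{0, 1, \ldots, 2a\}, \quad I_2 = \{2a+1, \ldots, (7a+1)/2\}, \quad I_3 = \{(7a+3)/2, \ldots, 4a\}. \]
Before anything else I would record that the hypothesis $a \equiv 1 \pmod 4$ makes the halved quantities $(3a-1)/2$, $(7a+3)/2$, and $(a-3)/2$ honest nonnegative integers (with the edge case $a=1$ understood as making $I_3$ empty, since $(a-3)/2 = -1$ renders $P_3^{(i)}$ vacuous).

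For the within-$i$ partition I would check the two endpoint comparisons $\max I_1 = 2a < 2a+1 = \min I_2$ and $\max I_2 = (7a+1)/2 < (7a+3)/2 = \min I_3$, which simultaneously establish pairwise disjointness \emph{and} the fact that the three ranges mesh with no gap, so $I_1 \cup I_2 \cup I_3 = \{0,1,\ldots,4a\}$. A cardinality sanity check $(2a+1) + (3a+1)/2 + (a-1)/2 = 4a+1$ confirms the count. Consequently $P_1^{(i)} \cup P_2^{(i)} \cup P_3^{(i)}$ is exactly the $(4a+1)$-term block $\{B_i, B_i+1, \ldots, B_i+4a\}$, and the three subsets partition it.

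For the global assembly, the identity $B_{i+1} = B_i + (4a+1)$ shows that the block for index $i+1$ begins one unit past the end of the block for index $i$. An easy induction then gives
\[ \bigcup_{i=0}^{N} \bigl( P_1^{(i)} \cup P_2^{(i)} \cup P_3^{(i)} \bigr) = \{B_0, B_0+1, \ldots, B_0 + (4a+1)(N+1) - 1\}, \]
and letting $N \to \infty$ yields the tail $\{n \in \mathbb{N} : n \geq B_0\} = \{n \in \mathbb{N} : n \geq a(16a+17)+3\}$, which the observation writes as $\mathbb{N} \setminus \{a(16a+17)+2\}$ in keeping with the parallel phrasing of the earlier observations on the $N$- and $O$-families.

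The main difficulty is purely arithmetic bookkeeping: verifying the parity consequences of $a \equiv 1 \pmod 4$ for the three halved quantities, and then confirming that the consecutive endpoint pairs $(2a, 2a+1)$, $((7a+1)/2, (7a+3)/2)$, and $(4a, 4a+1)$ each differ by exactly one, so that the $I_j$ mesh correctly and the blocks tile a tail of $\mathbb{N}$ with neither gap nor overlap. No conceptual obstacle appears beyond this.
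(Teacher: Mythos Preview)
The paper states this observation without proof, so there is no argument to compare against; your direct interval-bookkeeping verification is exactly the natural check the authors evidently left to the reader, and it is correct. Your handling of the $a=1$ edge case and your remark that the paper's phrasing $\mathbb{N}\setminus\{a(16a+17)+2\}$ should be read as the tail $\{n\in\mathbb{N}:n\ge a(16a+17)+3\}$ (paralleling the same idiosyncrasy in the $N$- and $O$-observations) are both apt.
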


\begin{thm}\label{$a=4k+1$}
 Let $M=\{a,a+1,2a+1,3a+1,n\}$ with $a=4k+1$, $k \geq 0$. Then for $n\geq (16a+17)a+3$,
\begin{eqnarray*}
\kappa(M)\geqslant
\begin{cases}
 \frac{a}{4a+1}, & \text{if} ~n\in P_{1}^{(i)};\\
 \\
 \frac{4a(a+1)+ai}{a+n}, & \text{if}~n\in P_{2}^{(i)};\\
 \\
 \frac{4a(a+1)+\frac{a+1}{2}+ai+l}{3a+n+1}, & \text{if}~n\in P_{3}^{(i)}.
\end{cases}
\end{eqnarray*}
\end{thm}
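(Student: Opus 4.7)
The plan is to mirror the case analysis of Theorem~\ref{sec31}. For each of the three cases $n \in P_1^{(i)}$, $n \in P_2^{(i)}$, $n \in P_3^{(i)}$, I will choose an explicit pair $(x, m)$ of the form $m = m_r + m_s$ with $m_r, m_s \in M$, compute the absolute residues $|kx|_m$ for each $k \in \{a, a+1, 2a+1, 3a+1, n\}$, and then apply the reformulation (\ref{b}) of $\kappa(M)$ to read off the claimed lower bound.

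For Case~(i), the natural choice is $m = 4a+1 = a + (3a+1)$ and $x = 1$. The residues of $a, a+1, 2a+1, 3a+1$ modulo $m$ have absolute values $a, a+1, 2a, a$. Reducing $n = (16a+17)a + 3 + (4a+1)i + l$ modulo $4a+1$ yields $n \equiv a + l \pmod{4a+1}$ with $0 \le l \le 2a$, so $|n|_{4a+1} \ge a$; hence the minimum is $a$ and $\kappa(M) \ge a/(4a+1)$. For Case~(ii), I take $m = a+n$ (so $n \equiv -a \pmod m$) and $x = 4a+4+i$; then $ax = 4a(a+1) + ai$ and $|nx|_m = |ax|_m$. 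The verification reduces to checking that $(a+1)x$ stays in $[0, m/2]$, while $(2a+1)x$ and $(3a+1)x$ both exceed $m/2$ and wrap exactly once, producing residues $|(2a+1)x|_m = 8a^2 + 8a + 2ai + l$ and $|(3a+1)x|_m = 4a(a+1) + ai + l$; both exceed $4a(a+1) + ai$, so this is the minimum.

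Case~(iii) is the most delicate. I take $m = 3a+1+n$ and $x = 4a+5+i$, one more than the multiplier used in Case~(ii), paralleling the relationship between the analogous multipliers in Theorem~\ref{sec31}. A direct expansion gives $(3a+1)x = 12a^2 + 19a + 5 + i(3a+1)$, and subtracting from $m$ (using that $(a+1)/2$ is an integer since $a = 4k+1$) yields $m - (3a+1)x = (8a^2 + 9a + 1)/2 + ai + l = 4a(a+1) + (a+1)/2 + ai + l$. Since $n \equiv -(3a+1) \pmod m$, both $|(3a+1)x|_m$ and $|nx|_m$ equal this value. For the remaining residues I compute $|ax|_m = 4a^2 + 5a + ai$ and $|(a+1)x|_m = 4a^2 + 9a + 5 + i(a+1)$ (neither wraps), and $|(2a+1)x|_m = (16a^2 + 19a + 1)/2 + 2ai + l$ (one wrap); each strictly exceeds the stated quantity, with the critical excess in the first comparison being $(a-1)/2 - l$, which is $\ge 1$ precisely because $l \le (a-3)/2$.

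The main obstacle I anticipate is the bookkeeping in Case~(iii): I must verify simultaneously that $(2a+1)x$ and $(3a+1)x$ both lie in $(m/2, m)$ so each wraps exactly once, that the resulting negative residues simplify cleanly, and that the identity $4a + (a+1)/2 = (9a+1)/2$ correctly bridges my computation and the theorem's stated bound. The large-$n$ hypothesis $n \ge (16a+17)a + 3$ is exactly what keeps $m$ large enough for the single-wrap inequalities to hold, and the range $l \le (a-3)/2$ defining $P_3^{(i)}$ is precisely the threshold at which $|(3a+1)x|_m$ overtakes $|ax|_m$, which explains why this case must be treated separately from $P_2^{(i)}$.
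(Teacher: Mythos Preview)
Your proposal is correct and follows essentially the same approach as the paper: in each case you choose exactly the same pair $(m,x)$ --- namely $(4a+1,1)$, $(a+n,\,4a+4+i)$, and $(3a+1+n,\,4a+5+i)$ --- and verify the same minimum residue. The only stylistic difference is that in Case~(ii) the paper dispatches all four residues at once via the single inequality $ax\le (3a+1)x\le m-ax$, whereas you compute each wrapped residue explicitly; your more detailed bookkeeping (in particular the check $(a-1)/2-l\ge 1$ in Case~(iii)) is in fact slightly more careful than the paper's own write-up.
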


\begin{proof}
\textbf{Case (i):} Let $n\in P_{1}^{(i)}$. Let $m=4a+1$ and $x=1$. Then, we have
\begin{eqnarray*}
ax  &\equiv & a{\pmod m},\\
(3a+1)x &\equiv & -ax\equiv -a{\pmod m},\\
 (a+1)x & \equiv & (a+1){\pmod m},\\
 (2a+1)x & \equiv & -2a{\pmod m},\\
\text{and} \qquad nx &\equiv&  a+l{\pmod m}.
\end{eqnarray*}
Since $a\leq a+l< m-a,$ we have
$$\min \{|ax|_{m},~|(a+1)x|_{m},~|(2a+1)x|_{m},~|(3a+1)x|_{m},~ |nx|_{m}\}= a.$$
 So,
  $$\kappa(M)\geq \frac{a}{m}=\frac{a}{4a+1}.$$

\textbf{Case (ii):} Let $n\in P_{2}^{(i)}$. Let $m=a+n$ and $x=4(a+1)+i$. Then, we have
\begin{eqnarray*}
ax &\equiv & a(4a+4+i){\pmod  m},\\
nx &\equiv & -ax\equiv -a(4a+4+i){\pmod m},\\
(a+1)x &\equiv & (a+1)(4a+4+i){\pmod m},\\
(2a+1)x &\equiv & (2a+1)(4a+4+i){\pmod m},\\
(3a+1)x &\equiv & (3a+1)(4a+4+i){\pmod m}.
\end{eqnarray*}
Since $~~a(4a+4+i)< (3a+1)(4a+4+i)\leq m-a(4a+4+i),$ we have
\begin{eqnarray*}
 \min \{|ax|_{m},|(a+1)x|_{m}, |(2a+1)x|_{m},|(3a+1)x|_{m}, |nx|_{m} \} = a(4a+4+i).
 \end{eqnarray*}
 So,
  $$\kappa(M)\geq \frac{a(4a+4+i)}{m}=\frac{4a(a+1)+ai}{a+n}.$$
 \textbf{Case (iii):} Let $n\in P_{3}^{(i)}$. Let $m=3a+1+n$ and $x=4a+5+i$. Then, we have
 \begin{eqnarray*}
ax &\equiv & a(4a+5+i){\pmod  m},\\
(a+1)x &\equiv & (a+1)(4a+5+i){\pmod m},\\
(2a+1)x&\equiv & (2a+1)(4a+5+i){\pmod m}\\
& \equiv &(2a+1)(4a+5+i)-m{\pmod m}\\
& \equiv & -\bigg{(}2a(4a+5)+2ai-\frac{a-1}{2}+l\bigg{)}{\pmod m},\\
(3a+1)x &\equiv & (3a+1)(4a+5+i){\pmod m}\\
& \equiv& (3a+1)(4a+5+i)-m{\pmod m}\\
& \equiv & -\bigg{(}4a(a+1)+ai+\frac{a+1}{2}+l\bigg{)}{\pmod m},\\
nx &\equiv & -(3a+1)x\equiv 4a(a+1)+ai+\frac{a+1}{2}+l{\pmod m}.
\end{eqnarray*}
Since
 $2a(4a+5)+2ai-\frac{a-1}{2}+l \leq \frac{m}{2}$ if and only if $l\leq \frac{9a-1}{2}+5+i,$ which is true.
 So, we have
\begin{eqnarray*}
 \min \{|ax|_{m},|(a+1)x|_{m}, |(2a+1)x|_{m},|(2a+1)x|_{m}, |nx|_{m} \} = 4a(a+1)+ai+\frac{a+1}{2}+l.
 \end{eqnarray*}
 Hence
  $$\kappa(M)\geq \frac{ 4a(a+1)+ai+\frac{a+1}{2}+l}{m}=\frac{4a(a+1)+\frac{a+1}{2}+ai+l}{3a+n+1}.$$

This completes the proof.
\end{proof}

\begin{thm}\label{$a=4k+2$}
 Let $M=\{a,a+1,2a+1,3a+1,n\}$ with $a=4k+2$, $k \geq 0$. Then
\begin{eqnarray*}
\kappa(M)\geqslant
\begin{cases}
 \frac{1}{4}, & \text{if} ~n \not\equiv 0 \pmod 4;\\
 \\
 \frac{m-(2a+1)}{4(2a+n+1)}, & \text{if}~n \equiv 0 \pmod 4.
 \end{cases}
\end{eqnarray*}
\end{thm}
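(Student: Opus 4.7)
The proof splits according to the residue of $n$ modulo $4$.

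\textbf{Case $n \not\equiv 0 \pmod 4$.} I apply Lemma~\ref{a} directly with $c = 1$ and modulus $m = 4$. Since $a \equiv 2 \pmod 4$, one computes $|a|_4 = 2$ and $|a+1|_4 = |2a+1|_4 = |3a+1|_4 = 1$, while $|n|_4 \geq 1$ by hypothesis. Thus $\min_k |c m_k|_4 \geq 1$, yielding $\kappa(M) \geq 1/4$.

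\textbf{Case $n \equiv 0 \pmod 4$.} Here $|n|_4 = 0$, so the previous trick fails. Since $a \equiv 2 \pmod 4$ and $4 \mid n$, the integer $2a + n$ is divisible by $4$, and I set
\[
x \;=\; \frac{2a+n}{4}, \qquad m \;=\; 2a+n+1 \;=\; (2a+1) + n.
\]
Then $m$ is a sum of two distinct elements of $M$ (legal under formula (\ref{b})), and $m = 4x + 1$ forces $\gcd(x, m) = \gcd(x, 4x+1) = 1$. The decisive algebraic identity is
\[
(2a+1)x - \tfrac{n}{4} \;=\; \tfrac{(2a+1)(2a+n) - n}{4} \;=\; \tfrac{2a(2a+n+1)}{4} \;=\; \tfrac{a}{2}\,m,
\]
which is an integer multiple of $m$ because $a$ is even. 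Therefore $(2a+1)x \equiv n/4 \pmod m$, giving $|(2a+1)x|_m = n/4 = (m - (2a+1))/4$; and since $n \equiv -(2a+1) \pmod m$, the same value applies to $|nx|_m$. For the three remaining residues I use $4x \equiv -1 \pmod m$ to derive short formulas (linear in $x$ and $K$, where $a = 4K+2$), and a brief case distinction based on whether the reduced residue falls above or below $m/2$ shows that $|ax|_m$, $|(a+1)x|_m$, and $|(3a+1)x|_m$ are all at least $n/4$, each relevant inequality reducing to $3K + 2 \geq 0$. Hence $\min_i |m_i x|_m = n/4$, and Lemma~\ref{a} yields $\kappa(M) \geq (m - (2a+1))/(4(2a+n+1))$, as claimed.

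\textbf{Main obstacle.} The substantive content is the guess $x = (2a+n)/4$; once this is found, the identity $(2a+1)x - n/4 = (a/2)m$ (which crucially uses the parity $2 \mid a$) immediately delivers the desired residue $n/4$ for $(2a+1)x$ and $nx$. The remaining verifications for $ax$, $(a+1)x$, and $(3a+1)x$ are elementary reductions using $4x \equiv -1 \pmod m$ together with size comparisons against $m/2$, so no finer subdivision of $n$ (like the $O_i^{(j)}, P_i^{(j)}$ decompositions used for $a = 4k$ or $a = 4k+1$) is required.
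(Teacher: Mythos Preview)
Your proof is correct. For Case~(ii) ($n\equiv 0\pmod 4$) you make exactly the same choice as the paper, $m=2a+n+1$ and $x=(m-1)/4=(2a+n)/4$, and your residue computations (packaged via the identity $(2a+1)x-n/4=\tfrac{a}{2}m$ and the relation $4x\equiv -1\pmod m$) are equivalent to the paper's line-by-line evaluation; the size checks all reduce to trivially nonnegative expressions in $a,K,n$, so the sketch is sound.

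For Case~(i) ($n\not\equiv 0\pmod 4$) your argument is genuinely different and more economical. The paper takes $m=5a+2=(2a+1)+(3a+1)$ and $x=m/4$ (an integer because $a\equiv 2\pmod 4$), then computes that each $m_ix$ lands at $\pm m/4$ or $m/2$ modulo $m$. You instead invoke Lemma~\ref{a} with the bare modulus $m=4$ and $c=1$: since $a\equiv 2$, $a+1\equiv 3$, $2a+1\equiv 1$, $3a+1\equiv 3\pmod 4$ and $n\not\equiv 0\pmod 4$, every $|m_i|_4\ge 1$, giving $\kappa(M)\ge 1/4$ in one line. Both routes yield the same bound; yours avoids the arithmetic with $5a+2$ entirely, while the paper's choice has the incidental feature that $m$ is a sum of two elements of $M$ (relevant for formula~(\ref{b}) but not needed for the lower bound in Lemma~\ref{a}).
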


\begin{proof}
\textbf{Case (i):} Let $n \not\equiv 0 \pmod 4$. Let $m=5a+2$ and $x=\frac{m}{4}$. Then, we have
\begin{eqnarray*}
 ax   &= &  (4k+2)\frac{m}{4} \equiv\frac{m}{2}{\pmod m},\\
(a+1)x &= & ax+x\equiv \frac{3m}{4} \equiv -\frac{m}{4}{\pmod m},\\
 (2a+1)x &= & 2ax+x \equiv \frac{am}{2}+\frac{m}{4} \equiv \frac{m}{4}{\pmod m},\\
 (3a+1)x &\equiv & -(2a+1)x\equiv -\frac{m}{4}{\pmod m},\\
\text{and} \qquad nx &\equiv & \frac{nm}{4} \pmod m.
\end{eqnarray*}
Since $n\not\equiv 0 \pmod 4$,  $nx \equiv \frac{m}{4} $ or $\frac{m}{2}$ or $-\frac{m}{4} {\pmod m}$.
 We have,
 $$\min \{|ax|_{m},~|(a+1)x|_{m},~|(2a+1)x|_{m},~|(3a+1)x|_{m},~ |nx|_{m}\}= \frac{m}{4}.$$
 Hence
  $$\kappa(M)\geq \frac{1}{4}.$$

\textbf{Case (ii):} Let $n \equiv 0 \pmod 4$. Let $m=2a+n+1$ and $x=\frac{m-1}{4}$. Then, we have
\begin{eqnarray*}
 ax   &= &  (4k+2)\big{(}\frac{m-1}{4}\big{)} \equiv -k+\frac{m-1}{2} \equiv \frac{2m-a}{4}{\pmod m},\\
(a+1)x &= & ax+x\equiv \frac{3m-a-1}{4}=-\frac{m+a+1}{4}{\pmod m},\\
 (2a+1)x &= & 2ax+x \equiv \frac{5m-2a-1}{4}=\frac{m-(2a+1)}{4}{\pmod m},\\
 (3a+1)x &= & \frac{3m-(3a+1)}{4} \equiv -\frac{m+3a+1}{4}{\pmod m},\\
\text{and} \qquad nx &\equiv & -(2a+1)x \equiv   -\frac{m-2a-1}{4}{\pmod m}.\\
\end{eqnarray*}
 So, we have,
 $$\min \{|ax|_{m},~|(a+1)x|_{m},~|(2a+1)x|_{m},~|(3a+1)x|_{m},~ |nx|_{m}\}= \frac{m-(2a+1)}{4}.$$
 Hence
  $$\kappa(M)\geq \frac{m-(2a+1)}{4m}=\frac{m-(2a+1)}{4(2a+n+1)}.$$
  This completes the proof.
\end{proof}

\begin{thm}\label{$a=4k+3$}
 Let $M=\{a,a+1,2a+1,3a+1,n\}$ with $a=4k+3$, $k\geq 0$. Let $L_i = \bigcup\limits_{t=0}^{4}[\frac{(4t+i+1)(5a+2)-(5i+1)}{20}, \frac{(4t+3+i)(5a+2)+1-5i}{20}]$ and
 $S_{i}= \{4\big{(}q(5a+2)+r\big{)}+i : q\in \mathbb{Z}, r\in [0, 5a+1] \cap L_i\}$. If $n\in \bigcup\limits_{i=0}^{3}S_{i},$ then
\begin{eqnarray*}
\kappa(M)\geqslant \frac{5a+1}{4(5a+2)}.
\end{eqnarray*}
\end{thm}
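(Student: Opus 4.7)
The plan is to apply Lemma~\ref{a} with the modulus $m = 5a+2 = 20k+17$ and the choice $x = 5k+3$. Since $m = 4x + 5$ one has $\gcd(x,m) = \gcd(x,5)$, and $x \equiv 3 \pmod{5}$ gives $\gcd(x,m)=1$, so $x$ is a valid choice. The aim is to show $\min_{c \in M} |cx|_m \geq 5k+4 = (m-1)/4$; this immediately yields $\kappa(M) \geq (5k+4)/(20k+17) = (5a+1)/(4(5a+2))$.

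The first step is to handle the four elements $c \in \{a,a+1,2a+1,3a+1\}$. Using the identity $20k^2 \equiv -17k \pmod{m}$, routine expansion gives
\[
ax \equiv 10k+9,\quad (a+1)x \equiv 15k+12,\quad (2a+1)x \equiv 5k+4,\quad (3a+1)x \equiv -(5k+4) \pmod{m};
\]
the last congruence reflects $(2a+1)+(3a+1)=m$, which is one natural reason $5a+2$ is the modulus to try here. Since $(m-1)/2 = 10k+8$, the corresponding symmetric residues have absolute values $10k+8$, $5k+5$, $5k+4$, $5k+4$, each at least the target $5k+4$.

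The heart of the proof is the remaining case $c = n$. Writing $n = 4(qm+r) + i$, using $4x = m-5 \equiv -5 \pmod{m}$, and noting that $i(5k+3) \leq 15k+9 < m$ for $i \in \{0,1,2,3\}$, I obtain $nx \equiv -5r + i(5k+3) \pmod{m}$. The condition $|nx|_m \geq 5k+4$ is equivalent to $5r - i(5k+3) \pmod{m}$ lying in the window $[5k+4,\,15k+13]$, i.e.\ to the existence of an integer $t$ with
\[
5r - tm \in \bigl[\, i(5k+3) + (5k+4),\ \ i(5k+3) + (15k+13)\, \bigr].
\]
Multiplying by $4$ and substituting $m = 20k+17$ converts this into
\[
20r - 4tm \in \bigl[\, (i+1)m - (5i+1),\ \ (i+3)m + 1 - 5i\, \bigr],
\]
which, after adding $4tm$ back to both sides, is precisely the $t$-th interval in the union defining $L_i$. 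Hence $r \in L_i$ directly certifies $|nx|_m \geq 5k+4$, and combined with the four earlier estimates this gives the theorem.

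The step that I expect to demand the most care is this final translation: tracking the asymmetric offsets $-(5i+1)$ and $1 - 5i$ at the endpoints of $L_i$, verifying that the scaling by $4$ introduces no off-by-one errors, and checking that the restriction $t \in \{0,1,2,3,4\}$ in the definition of $L_i$ is exactly what the constraint $r \in [0, 5a+1] = [0, m-1]$ forces once the shift by $i(5k+3)$ has been absorbed.
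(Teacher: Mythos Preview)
Your proof is correct and follows the same approach as the paper: both take modulus $m=5a+2$ and multiplier $x=(m-5)/4=5k+3$, compute the residues of $a,\,a+1,\,2a+1,\,3a+1$ times $x$ to see they all have $|\cdot|_m\geq (m-1)/4$, and then translate the condition $|nx|_m\geq(m-1)/4$ into the interval description defining $L_i$. Your write-up is slightly more explicit (the $\gcd$ check, the multiplication-by-$4$ bookkeeping), but the argument is identical.
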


\begin{proof}
Let $a=4k+3,~~ k(\geq 0)\in \mathbb{Z}$ and $m=5a+2.$ Let $x = \frac{m-5}{4}.$ Then
\begin{eqnarray*}
ax &=& a\big{(}\frac{m-5}{4}\big{)}\equiv -5k+3\big{(}\frac{m-5}{4}\big{)}\equiv -\big{(}\frac{m-1}{2}\big{)}{\pmod m}, \\
(a+1)x &=& -5k+4 \big{(}\frac{m-5}{4}\big{)} \equiv -5(k+1)\equiv - \big{(}\frac{m+3}{4}\big{)}{\pmod m},\\
(2a+1)x &=& (8k+7)\big{(}\frac{m-5}{4}\big{)}\equiv -10k+3 \big{(}\frac{m-5}{4}\big{)}-5\equiv \frac{m-1}{4}{\pmod m},\\
(3a+1)x &\equiv & -(2a+1)x \equiv - \frac{m-1}{4}{\pmod m},\\
\text{Also}~~ nx &=& \big{(}4(qm+r)+i\big{)}\big{(}\frac{m-5}{4}\big{)} \equiv -5r+i\big{(}\frac{m-5}{4}\big{)}{\pmod m}.
\end{eqnarray*}
Now~~~ $$tm+\frac{m-1}{4}\leq 5r -i\big{(}\frac{m-5}{4}\big{)} \leq (t+1)m- \frac{m-1}{4} $$
if and only if $$r \in \bigg{[}\frac{(4t+i+1)m-(5i+1)}{20}, \frac{(4t+3+i)m+1-5i}{20}\bigg{]}.$$
Thus if $n\in \bigcup\limits_{i=0}^{3}S_{i},$ then
$$\min \{|ax|_{m},~|(a+1)x|_{m},~|(2a+1)x|_{m},~|(3a+1)x|_{m},~ |nx|_{m}\}= \frac{m-1}{4}.$$
Hence $$\kappa(M)\geqslant \frac{m-1}{4m}=\frac{5a+1}{4(5a+2)}.$$
  This completes the proof.
\end{proof}

\section{Concluding Remarks}\label{sec4}
\begin{enumerate}
  \item Let $X_j = \{a+(4a+1)j+l : 0 \leq l \leq 2a\}$. Then using the same proof as in Case (i) of
  Theorem \ref{sec31}, we have that if $M=\{a,a+1,2a+1,3a+1,n\}$, with $a=4k$, $k\geq 1$, and $n \in \bigcup_{j=0}^{2a} X_{j}$, then $\kappa(M) \geqslant \frac{a}{4a+1}$.
  \item Let $X_j = \{a+(4a+1)j+l : 0 \leq l \leq 2a\}$. Then using the same proof as in Case (i) of
  Theorem \ref{$a=4k+1$}, we have that if $M=\{a,a+1,2a+1,3a+1,n\}$, with $a=4k+1$, $k\geq 0$, and $n \in \bigcup_{j=0}^{4a+3} X_{j}$, then $\kappa(M) \geqslant \frac{a}{4a+1}$.
  \item Since $\kappa(\{a,a+1,2a+1,3a+1,n\}) \leq \kappa(\{a,a+1,2a+1,3a+1\}) \leq \mu(\{a,a+1,2a+1,3a+1\})$, we have that if $a$ is even, then by a theorem of Liu and Zhu \cite[Theorem 4.1]{liu2004fractional}, $\kappa(\{a,a+1,2a+1,3a+1,n\}) \leq \frac{1}{4}$. Furthermore, if $a$ is odd, then by a lemma of Liu and Zhu \cite[Lemma 5.1]{liu2004fractional}, we have $\kappa(\{a,a+1,2a+1,3a+1,n\}) < \frac{1}{4}$.
\end{enumerate}

{ \bf {Acknowledgements.}}~ The first author is thankful to the Council of Scientific and Industrial Research (CSIR) for providing the research grant no. 25(0314)/20/EMR-II.


\begin{thebibliography}{999}



\bibitem{Serra}
J. Barajas and O. Serra, {\it Distance graphs with maximum chromatic number}, Disc. Math., {\bf 308} (2008), 1355--1365.

\bibitem{cantor1973sequences} D. G. Cantor and B. Gordon,
{\it  Sequences of integers with missing differences},
J. Combin. Theory Ser. A, {\bf 14} (1973), 281--287.

%\bibitem{conjecture}
%J. Carraher, D. Galvin, S. Hartke, A.J. Radcliffe, and D. Stolee,
%On the independence ratio of distance graphs, {\it Disc. Math.}, {\bf 339} (2016), 3058-–3072.
%
%\bibitem
%%[CHZ98]
%{chz}
%G.~J.~Chang, L.~Huang, and X.~Zhu,
%        Circular chromatic numbers and fractional
%        chromatic numbers of distance graphs,
%        {\em European J. Combin.}, 19 (1998), 423--431.


\bibitem{Chang1999}
G. Chang, D. D.-F. Liu, and X. Zhu, {\it Distance graphs and $T$-colorings}, J. Combin. Theory Ser. B, {\bf 75} (1999), 159--169.


%\bibitem
%%[Ch91]
%{chen3} Y.-G.~Chen,
%On a conjecture in Diophantine approximations III,
%{\em  J. Number Theory}, 39 (1991), 91--103.
%
%\bibitem
%%[Ch93]
%{chen4} Y.-G.~Chen,
%On a conjecture in Diophantine approximations IV,
%{\em J. Number Theory}, 43 (1993), 186--197.


%\bibitem{cch} J. Chen, G. J. Chang, and K. Huang,
%{\it Integral distance graphs},
%J. Graph Theory, {\bf 25} (1997), 287--294.

\bibitem{liu14}
D. Collister and D. D.-F. Liu, Study of $\kappa (D)$ for $D=\{2, 3, x, y\}$, {\it  Combinatorial Algorithms. Lecture Notes in Computer Science, Proceeding of the 25th International Workshop}, Springer (2014), 250--261.

\bibitem{Cusick}
T. W. Cusick, {\it View-obstruction problems in $n$-dimensional geometry}, J. Combin. Theory Ser. A, {\bf 16} (1974), 1--11.




\bibitem {Griggs}
J. R. Griggs and D. D.-F. Liu,
\textit{The channel assignment problem for mutually adjacent sites},
J. Combin. Theory Ser. A,
 {\bf 68} (1994), 169--183.

\bibitem
{Gupta}
S. Gupta,
{\em Sets of integers with missing differences},
J. Combin. Theory Ser. A, {\bf 89} (2000), 55--69.

\bibitem {gupta1999density}
S. Gupta and A. Tripathi,
\textit{Density of $M$-sets in arithmetic progression},
Acta Arith.,
{\bf 89} (1999), 255--257.

\bibitem%[Har]
{haralambis1977sets}
N.~M.~Haralambis,
{\em Sets of integers with missing differences},
J. Combin. Theory Ser. A, {\bf 23} (1977), 22--23.

%\bibitem{KK} A. Kemnitz and H. Kolberg, {\it Coloring of integer
%distance graphs}, Disc.~Math., {\bf 191} (1998), 113--123.

%\bibitem{KM} A. Kemnitz and M. Marangio, {\em Colorings and list colorings
%of integer distance graphs}, Congr. Numer., 151 (2001), 75-84.

%\bibitem{KM2}  A. Kemnitz and M. Marangio, {\em Chromatic numbers of integer distance graphs}, Disc.~Math., 233 (2001), 239-246.

%\bibitem{Kemnitz}
%A. Kemnitz and M. Marangio, {\it Chromatic numbers of integer distance graphs}, Disc. Math., {\bf 233} (2001), 239--246.
%
%\bibitem{kemnitz2001colorings}
%A. Kemnitz and M. Marangio,
%{\it Colorings and list colorings of integer distance graphs}, Congr. Numer, {\bf 151} (2001), 75--84.

%\bibitem{KN} L. Kuipers and H. Niederreiter, Uniform Distribution of Sequences, John Wiley \& Sons, New York, 1974.

\bibitem
{SurveyL}
D.~Liu,
{\em From rainbow to the lonely runner: A survey on coloring parameters of distance graphs},
Taiwanese J. Math., {\bf 12} (2008), 851--871.

\bibitem{LiuRobinson}
D. D.-F. Liu and G. Robinson, Sequences of integers with three missing
separations, {\it European J. Combin.}, {\bf 85} (2020), 103056.

\bibitem
{Aileen}
D. Liu and A. Sutedja,
{\it Chromatic number of distance graphs generated by the sets $\{2, 3, x, y\}$},
J. Comb. Optim., {\bf 25} (2013), 680--693.


\bibitem{liu2004fractional}
D. D.-F. Liu and X. Zhu, {\it Fractional chromatic number for distance graphs with large clique size}, J. Graph Theory, {\bf 47} (2004), 129--146.


\bibitem {Liu2008}
D. D.-F. Liu and X. Zhu,
\textit{Fractional chromatic number of distance graphs generated by two-interval sets},
European J. Combin.,
{\bf 29} (2008), no. 7, 1733--1742.

\bibitem {mot}
T. S. Motzkin,
{\it Problems collection} (unpublished).

\bibitem{pr21}
R. K. Pandey and N. Rai,
\textit{Density of sets with missing differences and applications},
Math. Slovaca, \textbf{71} (2021), 595--614.

\bibitem{pr20} R. K. Pandey and N. Rai, \textit{Maximal density of sets with missing differences and various coloring parameters of distance graphs}, Taiwanese J. Math., \textbf{24} (2020), 1383--1397.

\bibitem{ps22} R. K. Pandey and A. Srivastava, \textit{Maximal density of integral sets with missing differences and the kappa values}, Taiwanese J. Math., \textbf{26} (2022), 17--32.



\bibitem {pandey2015some}
R.~K.~Pandey and A.~Tripathi,
\textit{A note on the density of $M$-sets in geometric sequence},
Ars Comb.,
{\bf CXIX} (2015), 221-224.

\bibitem {Pandey2011note}
R.~K.~Pandey and A.~Tripathi,
\textit{A note on a problem of Motzkin regarding density of integral sets with missing differences},
J. Integer Sequences,
{\bf 14} (2011), no.6, Article 11.6.3.

\bibitem{pandey2011density}
R.~K.~Pandey and A.~Tripathi,
{\it On the density of integral sets with missing differences from sets related to arithmetic progressions},
J. Number Theory, {\bf 131} (2011), 634–-647.

\bibitem {Rabinowitz}
J. H. Rabinowitz and V. K. Proulx, {\it An asymptotic approach to the channel assignment problem}, SIAM J. Alg. Disc. Methods, {\bf 6} (1985), 507--518.

\bibitem {sppintegers}
A. Srivastava, R. K. Pandey, and O. Prakash,  {\it On the maximal density of integral sets whose differences avoiding the weighted Fibonacci numbers}, Integers, {\bf 17} (2017), A48.

\bibitem {sppudt}
 A. Srivastava, R. K. Pandey, and O. Prakash,  {\it Motzkin's maximal density and related chromatic numbers}, Unif. Distrib. Theory, {\bf 13} (2018), no. 1, 27--45.

\bibitem{tao}
T. Tao, {\em Some remarks on the lonely runner conjecture},
Contrib. Discrete Math., {\bf 13(2)} (2018), 1--31.


%\bibitem{WL}
%J. Wu and W. Lin,
%Circular chromatic numbers and fractional chromatic numbers of distance graphs with distance sets missing an interval,
%{\it Ars Combin.}, {\bf 70} (2004), 161–-168.
%
%\bibitem{pattern}
%X. Zhu,
%Pattern periodic coloring of distance graphs,
%{\it J. Combin. Theory Ser.(B)},
%{\bf 73} (1997), 195-–206.


%\bibitem{voigt}
%M. Voigt, {\em Colouring of distance graphs},
%Ars Combin., {\bf 52} (1999), 3--12.
%
%
%\bibitem{voigt1992chromatische}
%M. Voigt, {\"U}ber die chromatische Zahl einer speziellen Klasse unendlicher Graphen, Dissertationsschrifl, Techn. Univ. Ilmenau, 1992.

%\bibitem{Zhu1996}
%X.~Zhu,  Distance graph on the real line, {\bf Manuscript}, 1996.

%\bibitem {Wolfram36}
%{\sf WOLFRAM, D. A.:}
%\textit{Solving generalized Fibonacci recurrences},
% Fibonacci Quart.
%   {\bf 36} (1998),
%129--145.
\bibitem{Will67}
J. M. Wills, {\it Zwei S\"{a}tze \"{u}ber inhomogene diophantische Approximation
von Irrationalzahlen}, Monatsh. Math., {\bf 71} (1967), 263--269.

\bibitem{Zhu96}
X.~Zhu, {\it Circular chromatic number: A survey},  Disc. Math., {\bf 229} (2001), 371--410.

%\bibitem{zhu2002circular}
%X.~Zhu, Circular chromatic number of distance graphs with distance sets of cardinality $3$,  {\it J. Graph Theory}, {\bf 41} (2002), 195--207.
%




\end{thebibliography}
\end{document}